\documentclass[11pt,reqno]{amsart}

\usepackage{amsmath,amssymb,amsthm,mathtools}

\usepackage{newtxtext,newtxmath}
\usepackage{parskip}

\usepackage[margin=1in]{geometry}
\usepackage[pagebackref, colorlinks=true,linkcolor=red,citecolor=blue,urlcolor=blue]{hyperref}

\newtheorem{theorem}{Theorem}[section]
\newtheorem{proposition}[theorem]{Proposition}
\newtheorem{lemma}[theorem]{Lemma}
\newtheorem{corollary}[theorem]{Corollary}
\newtheorem*{maintheorem}{Theorem}
\theoremstyle{definition}
\newtheorem{definition}[theorem]{Definition}

\newtheorem{remark}[theorem]{Remark}

\newtheorem{notation}[theorem]{Notation}
\newtheorem{notrem}[theorem]{Notation/Remark}

\DeclareMathOperator{\soc}{soc}
\newcommand{\NS}{\mathrm{NS}}

\begin{document}

\title[WLP for AG algebras of codimension three]
{On the weak Lefschetz property of Artinian Gorenstein
algebras of codimension three in arbitrary characteristic}

\author[Omkar Javadekar]{Omkar Javadekar}
	\address{Chennai Mathematical Institute, Siruseri, Tamilnadu 603103. INDIA}
	\email{omkarjavadekar@gmail.com, omkarj@cmi.ac.in}

\subjclass[2020]{ 13E10; 13H10; 13D40}

\keywords{Weak Lefschetz property, Artinian Gorenstein algebra, Hilbert
function}

\begin{abstract}
Let $\mathsf k$ be a field, $S=\mathsf k[x,y,z]$, and $R=S/I$ be a standard graded Artinian Gorenstein $\mathsf k$-algebra of codimension three.
The $h$-vector of such an algebra is known to be symmetric and unimodal.
Mir\'o-Roig proved that if $\mathsf k$ is algebraically closed of characteristic zero and the $h$-vector of $R$ has at least three peaks, then $R$ has the weak Lefschetz property. In this article, we extend this result to any infinite field of arbitrary characteristic, using a different, elementary, and more direct argument. In particular, we recover Mir\'o-Roig's theorem without the hypothesis that $\mathsf k$ is algebraically closed. 
Along the way, we also prove a statement of independent interest that holds over any field: if the $h$-vector of $R$ has at least two peaks, and if $s$ is the largest degree of a peak, then the elements of $I$ of degree at most $s$ have no common factor.
\end{abstract}

\maketitle

\section{Introduction}
Let $R$ be an Artinian standard graded algebra over a field $\mathsf k$ with socle degree $d$. 
We say that $R$ has the \emph{weak Lefschetz property (WLP)} if there exists a linear form $\ell \in R_1$ such that given any $i \geq 0$ the multiplication map $\times \ell: R_i \to R_{i+1}$ is injective or surjective. 
Despite the very simple looking definition, determining whether a given algebra has the WLP turns out to be a hard problem. This has led to a rich and active area of research. For an overview of the developments in this area, we refer the interested reader to the survey of Migliore--Nagel \cite{MigNagTour}.

One of the classes of algebras for which the WLP has been studied extensively is that of Artinian Gorenstein algebras. Over an infinite field, every Artinian quotient of a polynomial ring in two variables has the WLP in any characteristic. Roughly speaking, this is because, modulo a general linear form, we obtain a quotient of a polynomial ring in one variable, which forces the multiplication maps to have maximal rank. This is precisely the idea that we try to extend, since in three variables, modulo a general linear form, we are left with a quotient of a polynomial ring in two variables, where we again have good control.
In codimension four and higher, in both characteristic zero and positive characteristic, there are Artinian Gorenstein algebras which do not satisfy the WLP (see, e.g., \cite[Remark 2.9, Example 3.10]{HMNW}). So, codimension three is exactly the case where the answer is not known, and it has been conjectured that in
characteristic zero every Artinian Gorenstein algebra of codimension three has the WLP (see, e.g.,~\cite{BMMNZ, MigNagTour, MiglioreZanello}).

In characteristic zero, the question has been settled in several special cases, but it remains open in general. 
Harima--Migliore--Nagel--Watanabe \cite{HMNW} proved that every complete intersection of codimension three has the WLP, using rank two vector bundles on $\mathbb P^2$ and the Grauert--M\"ulich theorem. 
Boij--Migliore--Mir\'o-Roig--Nagel--Zanello \cite{BMMNZ} reduced the general problem to compressed Gorenstein algebras of odd socle degree and settled the Hilbert function $(1,3,6,6,3,1)$ in arbitrary characteristic. In particular, they showed that in characteristic zero, all codimension three Artinian Gorenstein algebras of socle degree at most six have the WLP. Recently, Mir\'o-Roig \cite{MiroRoig} proved that if $\mathsf k$ is algebraically closed of characteristic zero and the $h$-vector of $R$ has at least three peaks, then $R$ has the WLP. Mir\'o-Roig's proof is by induction on the number of peaks, and uses a theorem of Green \cite[Theorem 1]{Green}, a result of Iarrobino--Kanev \cite[Proposition 5.31]{IarrobinoKanev} saying that such an algebra is a doubling of a zero dimensional subscheme of $\mathbb P^2$, and a construction of a codimension $3$ Artinian Gorenstein graded $\mathsf k$-algebra with socle degree $d+1$ and $\rho+1$ peaks from one with socle degree $d$ and $\rho$ peaks.

In general, the weak Lefschetz property is characteristic sensitive. Even for monomial complete intersections, whether the WLP holds depends on the characteristic (see \cite{BrennerKaid, CookII, KustinVraciu}).
The same happens outside the class of complete intersections. For instance, by \cite[Theorem 3.8]{BMMNZ}, an Artinian Gorenstein algebra with $h$-vector $(1,3,6,6,3,1)$ has the WLP in every characteristic except $3$, where exactly one ideal, up to a change of variables, fails the WLP. One of the reasons why the assumption $\operatorname{char}(\mathsf k)=0$ is hard to remove from the known proofs is that they pass through results that are themselves characteristic dependent, such as the Grauert--M\"ulich theorem in
\cite{HMNW}.

The aim of this article is to extend Mir\'o-Roig's theorem to any infinite field of arbitrary characteristic by an elementary and more direct argument. Our main theorem is as follows.

\begin{maintheorem}[see Theorem~\ref{thm:main}]
Let $\mathsf k$ be an infinite field, $S=\mathsf k[x,y,z]$, and $R=S/I$ be a standard graded Artinian Gorenstein $\mathsf k$-algebra of codimension three. If the $h$-vector of $R$ attains its maximum at least three times, then $R$ has the weak Lefschetz property.
\end{maintheorem}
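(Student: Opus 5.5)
Let $h=(h_0,\dots,h_d)$ be the $h$-vector, with maximum value $M$ attained in degrees $t,\dots,s$. Thus $s-t\ge 2$, and by symmetry $t+s=d$. The plan rests on the standard reduction for Gorenstein algebras. Because $h$ is symmetric and unimodal, and multiplication by $\ell$ on $R$ is dual to multiplication by $\ell$ on the dualizing module $\omega_R\cong R(d)$, it is enough to show that $\times\ell:R_{i}\to R_{i+1}$ is injective for all $i\le \lfloor (d-1)/2\rfloor$. In fact it suffices to check injectivity for $i\le s-1$, that is, to show
\[
\dim (R/\ell R)_{i} = h_{i}-h_{i-1}\quad\text{for } i\le s.
\]
Equivalently, $(I+\ell S)/\ell S\subset \bar S=S/\ell S\cong \mathsf k[u,v]$ must have Hilbert function $\max(0,(i+1)-\dim I_i\text{-contribution})$, i.e. $\dim_{\mathsf k}\bigl((I+\ell S)/\ell S\bigr)_i=\dim I_i$ for $i\le s$. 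Injectivity of $\ell$ on $R_{i-1}$ is exactly the statement that $I_i\cap \ell S_{i-1}=\ell I_{i-1}$.

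\textbf{Step 1 (no common factor).} I would first prove the statement announced in the abstract: if $h$ has at least two peaks, then $I_{\le s}$ has no common factor. Suppose the elements of $I_{\le s}$ had a common factor $F$ of degree $a\ge1$. Then $I_i\subseteq F S_{i-a}$ for $i\le s$, so $h_i\ge \binom{i+2}{2}-\binom{i-a+2}{2}$. This quantity grows strictly with $i$ (by $a$ at each step once $i\ge a$), and $h_s=h_{s-1}$ is needed. One then plays this against Macaulay/Gotzmann-type growth bounds in codimension three. The point is that if $h_{s-1}=h_s$ and the maximum is $M$, the ideal in degrees $s-1,s$ has maximal growth, so by Gotzmann persistence $I_{\le s}$ is (up to saturation) the ideal of a scheme in degree $s$. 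A common factor would force a curve component, whose Hilbert function would still be growing, and this contradicts $h_{s-1}=h_s$. The alternative is to use the Gorenstein symmetry via inverse systems. I expect this step to be checked by counting dimensions.

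\textbf{Step 2 (reduction to two variables).} Take $\ell$ general. Since $I_{\le s}$ has no common factor, the ideal $J=(I_{\le s})$ has height $\ge2$, so $S/J$ has dimension $\le1$. With three peaks we have $h_{s-2}=h_{s-1}=h_s=M$. This means $\Delta h=0$ in degrees $s-1$ and $s$, so the Hilbert function of $S/J$ is constant $=M$ in degrees $s-2,\dots,s$ (since $J$ agrees with $I$ there). Gotzmann persistence gives that $J_{\ge s-2}$ coincides with the saturated ideal of a zero-dimensional scheme $Z$ of degree $M$, with $I_{\le s}$ equal to $I_Z$ in degrees $\le s$. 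Here I would use Step 1 to rule out one-dimensional components, and the two equal steps to apply Gotzmann. Then $S/I_Z$ is Cohen--Macaulay of dimension one, and a general $\ell$ (available since $\mathsf k$ is infinite, which lets $\ell$ avoid the finitely many associated points) is a nonzerodivisor on $S/I_Z$. Hence $I_{Z,i}\cap \ell S_{i-1}=\ell I_{Z,i-1}$ for all $i$, and therefore $I_i\cap\ell S_{i-1}=\ell I_{i-1}$ for all $i\le s$. This is exactly injectivity of $\times\ell$ on $R_{i-1}$ for $i\le s$, and duality finishes the argument.

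\textbf{Main obstacle.} The delicate point is the Gotzmann step. Over arbitrary infinite fields, Gotzmann persistence holds, but one must verify that the constant value $M$ really is maximal growth. This requires $M\le s-1$, or more precisely that $M$'s Macaulay representation in degree $s-1$ is $\binom{s-1}{s-1}+\dots$. The required bound $M\le s$ (or its analogue) should come from combining Step 1 with Macaulay's bound. If that fails, I would instead argue directly: the two-variable quotient $\bar R$ has $\Delta h$ given by the Hilbert function of $\bar J$, which has no common factor. Its kernel computations can then be done in $\mathsf k[u,v]$, where ideals generated by forms without common factor behave predictably.
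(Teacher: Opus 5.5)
\textbf{There is a genuine gap in Step 2, at the point you flagged, and the proposed fix is false.} Gotzmann persistence needs maximal growth. A constant value $M$ in degrees $i,i+1$ is maximal growth only when $M\le i$: if $M>i$, the leading binomial in the $i$-th Macaulay representation of $M$ is $\binom{k}{i}$ with $k\ge i+1$, so $M^{\langle i\rangle}>M$. The bound $M\le s-1$ (or $M\le s$) fails for most three-peak $h$-vectors. For example, $I=\langle xy,xz,yz,x^4-y^4,x^4-z^4\rangle$ is Gorenstein in every characteristic with $h=(1,3,3,3,1)$. Here $M=3=s$, while $3^{\langle 1\rangle}=6$ and $3^{\langle 2\rangle}=4$. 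Likewise $(1,3,6,6,6,3,1)$, realized in characteristic zero by the annihilator of a sum of six general sixth powers, has $M=6>s=4$. So no Gotzmann argument starts, and identifying $I_{\le s}$ with the ideal of a zero-dimensional scheme is unsupported. Even when persistence applies, it gives agreement with the saturation only from the Gotzmann degree upward, not ``$I_{\le s}=I_Z$ in all degrees $\le s$''.

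Height $\ge 2$ alone only makes a general $\ell$ a nonzerodivisor modulo the saturation of $\langle I_{\le s}\rangle$. You would still need $\langle I_{\le s}\rangle$ to be saturated in degree $\lfloor (d-1)/2\rfloor$, and that is the whole difficulty. What you are aiming for is essentially the Iarrobino--Kanev doubling statement, which the paper deliberately avoids. Your fallback ``argue directly in $\mathsf k[u,v]$'' is where the actual proof lives, and you do not supply it.

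In the paper, put $c_i=\dim_{\mathsf k}(R/\ell R)_i$. Gorenstein duality gives $c_i-c_{d+1-i}=h_i-h_{i-1}$. For general $\ell$, the image $\bar J$ of $I$ in $T=\mathsf k[u,v]$ is nonzero in degree $t+1$, so $c_i$ is non-increasing from there. Hence $c_{t+1}=\dots=c_s=\gamma$, and the WLP holds iff $\gamma=0$. If $\gamma\ge1$, then $\dim_{\mathsf k}\bar J_{i+1}=\dim_{\mathsf k}\bar J_i+1$ for $t+1\le i\le s-1$; this range is nonempty exactly because there are three peaks. The equality case of the minimal-growth lemma in two variables then forces $\bar J_i=g\cdot T_{i-\gamma}$ with $\deg g=\gamma$, hence $\bar J_{\le s}\subseteq\langle g\rangle$. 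Lifting, $I_{\le s}\subseteq\langle\tilde g,\ell\rangle$ for a general $\ell$. This forces a height-one minimal prime of $\langle I_{\le s}\rangle$, i.e.\ a nonconstant common factor.

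\textbf{Step 1 is not proved either.} The bound $h_i\ge\binom{i+2}{2}-\binom{i-a+2}{2}$ does not contradict $h_{s-1}=h_s$, and the ``curve component'' heuristic again relies on the Gotzmann regime. The missing idea is to pass to the colon ideal. Suppose $f=\gcd(I_{\le s})$ has degree $\gamma_0\ge1$. One checks that $\gamma_0\le t$, that $f\notin I$, and that $I_i=f\cdot(I:f)_{i-\gamma_0}$ for $i\le s$. Moreover $S/(I:f)$ is again Artinian Gorenstein, of socle degree $d-\gamma_0\ge s$, so its $h$-vector $h'$ is symmetric and unimodal by Stanley's theorem. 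From $h_i=\binom{i+2}{2}-\binom{i-\gamma_0+2}{2}+h'_{i-\gamma_0}$ you get $h'_{i-\gamma_0}-h'_{i-\gamma_0-1}=-\gamma_0$ for $t+1\le i\le s$. Symmetry gives $h'_s=h'_{t-\gamma_0}>h'_{s-\gamma_0}$. So $h'$ has a strict valley at $s-\gamma_0$, contradicting unimodality.

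A minor slip: injectivity of $\times\ell$ on $R_{i-1}$ means $\dim_{\mathsf k}\bigl((I+\ell S)/\ell S\bigr)_i=\dim_{\mathsf k}I_i-\dim_{\mathsf k}I_{i-1}$, not $\dim_{\mathsf k}I_i$.
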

The idea of the proof is the following. For a Gorenstein algebra with unimodal $h$-vector, by \cite[Proposition 2.1]{MiglioreMiroRoigNagel}, it is enough to test the WLP in a single ``middle degree''. So everything boils down to one multiplication map. Fix a general linear form $\ell$, and let $c_i$ denote the dimension of the cokernel of $R_{i-1} \xrightarrow[]{\times \ell} R_i$. Thus $\times \ell$ is surjective in degree $i$ exactly when $c_i=0$. The advantage of working with the $c_i$ is that this cokernel is the degree $i$ component of $S/(I+\langle \ell\rangle)$, which is a quotient of a polynomial ring in two variables. Writing $J$ for the image of $I$ in $S/\langle \ell\rangle \cong \mathsf k[u,v]$, we get $c_i=(i+1)-\dim_{\mathsf k}(J_i)$. This way, the problem becomes one about a homogeneous ideal in two variables.
Assume now that the $h$-vector of $R$ has at least three peaks, and that they appear in degrees $t, t+1, \ldots, s$. Using the symmetry of the $h$-vector and Gorenstein duality, we show that the $c_i$ take a common value $\gamma$ in the range $t+1 \leq i \leq s$, and that $R$ has the WLP exactly when $\gamma=0$. If $\gamma \geq 1$, then $\dim_{\mathsf k}(J_i)$ grows as slowly as it can over the peak degrees. This forces the forms of $I$ of degree at most $s$ to have a common factor of positive degree. But Theorem~\ref{thm:gcd} shows that such a common factor cannot exist. Hence $\gamma=0$, and $R$ has the WLP.

We would like to mention that Theorem~\ref{thm:gcd} is of independent interest. It says nothing about Lefschetz properties, needs only two peaks, and holds over any field. Also, for the proof of the main theorem, we use only two results from the literature. The first is Stanley's theorem that $h$-vectors of codimension three Gorenstein algebras are unimodal (see \cite{StanleyHF, Zanello}). The second is the criterion \cite[Proposition 2.1]{MiglioreMiroRoigNagel} mentioned above. Both these results hold in arbitrary characteristic, and this is what makes our argument characteristic free.

We remark that the hypothesis on the number of peaks cannot be weakened in positive characteristic. For instance, if $\operatorname{char}(\mathsf k)=2$ and $R=\mathsf k[x,y,z]/\langle x^2,y^2,z^2\rangle$, then the $h$-vector $(1,3,3,1)$ of $R$ has two peaks. But given any $\ell=ax+by+cz$, the determinant of the map $R_1 \xrightarrow[]{\times \ell} R_2$ with respect to the bases $\{x,y,z\}$ and $\{xy, xz, yz\}$ equals $-2abc$. Hence $\times \ell$ is not injective for any $\ell$, and $R$ does not have the WLP. In fact, the ring $R$ in the above example is a complete intersection.

The article is organized as follows. Section~\ref{sec:prelim} fixes notation and recalls the facts we need about Artinian Gorenstein algebras. Section~\ref{sec:twovar} contains the results on homogeneous ideals of $\mathsf k[u,v]$. In Section~\ref{sec:constant}, we obtain a criterion for the WLP in terms of the vanishing of a constant. Finally, in Section~\ref{sec:divisor}, we prove Theorem~\ref{thm:gcd} and use it to deduce our main theorem.

\section*{Acknowledgements}
The author would like to thank Meghana Bhat for her comments on an earlier draft of this article, some useful discussions related to Gorenstein algebras, and for sharing resources on Lefschetz properties, which helped him learn the basic ideas and get familiar with some of the problems in this area. The author gratefully acknowledges support from a postdoctoral fellowship from Chennai Mathematical Institute and additional support from the Infosys Foundation.
\section{Preliminaries and Basic Results}\label{sec:prelim}

Throughout this article, unless stated otherwise, $\mathsf k$ denotes an infinite field. To avoid ambiguity, whenever the infiniteness of $\mathsf k$ is not needed, we make it explicit. Also, throughout $S = \mathsf k[x,y,z]$ denotes the standard graded polynomial ring in three variables over $\mathsf k$.  We let $\mathfrak{m}$ denote the unique homogeneous maximal ideal $\langle x, y, z\rangle$ of $S$. For any graded object $M$, we use $M_i$ to denote its $i^{th}$ graded component. All rings considered in this article are standard graded, and all ideals are graded. Even though many of the terms below can be defined in greater generality, for the purposes of this article, we only consider them in the setting of the polynomial ring $S=\mathsf{k}[x,y,z]$.

Let $I \subseteq \mathfrak m$ be a homogeneous ideal of $S$ and let $R = S/I$.
Then $R = \bigoplus_{i \ge 0} R_i$ is standard graded, i.e., $R_0=\mathsf k$ and $R=\mathsf k[R_1]$, with $R_i = S_i / I_i$. The \emph{Hilbert function} of $R$ is
the function $H: \mathbb Z_{\geq 0} \to \mathbb Z_{\geq 0}$ given by 
  $$H_R(i) = \dim_{\mathsf k} (R_i) = \dim_{\mathsf k}(S_i)- \dim_{\mathsf k} (I_i)=\binom{i+2}{2} - \dim_{\mathsf k}(I_i).$$
The algebra $R$ is said to be \emph{Artinian} if $\dim_{\mathsf k} (R) < \infty$, i.e., if $R_i = 0$ for $i\gg 0$. In this case, the \emph{socle degree of $R$} is defined as the largest integer $d$ such that $R_d \ne 0$, and the vector $h(R) = (h_0,h_1,\dots,h_d)$, where $h_i = \dim_{\mathsf k} (R_i)$ is called the \emph{$h$-vector} of $R$. The number $h_1$ is called the \emph{codimension} of $R$. Thus, in our setup, $R$ has codimension three if and only if $I \subseteq \mathfrak m^2$.

The colon ideal $0 :_R \mathfrak m = \{a \in R \mid \mathfrak m a = 0\}$ is called the \emph{socle of $R$}, and we denote it by $\soc(R)$. Note that if $R$ is Artinian with socle degree $d$, then $R_d \subseteq \soc(R)$. We say that $R$ is \emph{Gorenstein} if $\dim_{\mathsf k}(\soc(R)) =1$. Thus, for an Artinian Gorenstein algebra of socle degree $d$, we have $\soc(R)=R_d$, and $h_d=h_0=1$.

We record some well-known facts about Artinian Gorenstein algebras which we use in the later sections. For more details, we refer the reader to \cite{BrunsHerzog}. 
\begin{remark}\label{rem:AG-facts}
Let $R=S/I$ be a standard graded Artinian Gorenstein $\mathsf k$-algebra of codimension three and socle degree $d$.    
\begin{enumerate}
    \item[{\rm (a)}] Given any $0 \leq i \leq d$, the bilinear pairing
$$ R_i \times R_{d-i} \longrightarrow R_d \cong \mathsf k, \qquad (a,b) \longmapsto ab $$
is a perfect pairing of $\mathsf k$-vector spaces. As a consequence, we get that $h_i = h_{d-i}$ for all $i$. Thus, the $h$-vector of $R$ is symmetric.
    \item[{\rm (b)}] Given any nonzero element $a \in R_i$, we have $\langle a \rangle_j \neq 0$ for all $j$ with $i \leq j\leq d$. In particular, $\soc(R) \subseteq \langle a \rangle$.
\end{enumerate}
\end{remark}

\begin{definition}
Let $R$ be a standard graded Artinian $\mathsf k$-algebra. We say that $R$ has the \emph{weak Lefschetz property (WLP)}  if there exists a linear form $\ell \in R_1$ such that for every $i \ge 0$ the map $\times \ell \colon R_i \to R_{i+1}$ has maximal rank, i.e., the map $\times \ell$ is injective or surjective.
\end{definition}
If $R$ has the WLP, then the $h$-vector of $R$ is unimodal. More precisely, if the socle degree of $R$ is $d$, then there exists $r$ such that $$h_0 \leq h_1 \leq \cdots \leq h_{r-1}\leq h_r \geq h_{r+1} \geq \cdots \geq h_{d-1} \geq h_d. $$
Therefore, if $R$ is Artinian Gorenstein with WLP, then $h(R)$ is symmetric as well as unimodal.

\begin{remark}\label{rem:Stanley}
  Let $\mathsf k$ be any field (not necessarily infinite) and $R$ be a standard graded Artinian Gorenstein algebra of codimension at most $3$. Due to the work of Stanley \cite{StanleyHF} (also see \cite{Zanello}), it is known that if $R$ has codimension $3$, then $h(R)$ is unimodal. Also, it is well-known that in codimension $2$, $R$ is a complete intersection, and has unimodal $h(R)$. Finally, it is easy to see that if codimension $\leq 1$, then $h(R)$ is unimodal. 
\end{remark}

For Artinian Gorenstein algebras, we have the following criterion to test the WLP. 
\begin{remark}[see {\cite[Proposition 2.1]{MiglioreMiroRoigNagel}}]\label{rem:middle}
Let $R$ be a standard graded Artinian Gorenstein $\mathsf k$-algebra with socle degree $d$ and unimodal $h$-vector. Set $j_0 = \lfloor (d-1)/2 \rfloor$. Then $R$ has the weak Lefschetz property if and only if for a general linear form $\ell$, the map $ R_{j_0} \xrightarrow[]{\times \ell} R_{j_0+1}$ is injective.
\end{remark}

{{From now on, for the rest of the article, unless stated otherwise, $R = S/I$ denotes a standard graded Artinian Gorenstein
$\mathsf k$-algebra of codimension three,  socle degree $d$, and $h$-vector
$h(R) = (h_0,\dots,h_d)$.}} 

The notation we set up next is the main idea of our approach. Instead of the ranks
of the multiplication maps by linear forms, we focus on the dimensions of their
cokernels. These cokernels are a quotient of a polynomial ring $\mathsf k[u,v]$ in two variables. This reduces the problem into understanding the structural properties of homogeneous ideals in two variables.

\begin{notrem}\label{notrem:c_i}
Let $R=S/I$ be an Artinian algebra of codimension $3$ with socle degree $d$, and $\ell$ be a nonzero linear form in $R$. 
\begin{enumerate}
    \item[{\rm (a)}] We have the following four-term exact sequence:
$$0 \to (0 :_R \,\ell)(-1) \to R(-1) \xrightarrow[]{ \times \ell} R \to R/\langle \ell \rangle \to 0.$$ 

     \item[{\rm (b)}] 
We set $$c_i = \dim_{\mathsf k} \left( \left(\dfrac{R}{\langle \ell \rangle}\right)_i \right) = \dim_{\mathsf k} \left( \left(\dfrac{S}{I+\langle \tilde\ell \rangle}\right)_i \right),$$
where $\tilde\ell$ is any lift of $\ell$ to $S$.  For the sake of simplicity, we may sometimes use $\ell$ itself to denote the lift $\tilde\ell$. Note that the number $c_i$ depends on the choice of $\ell$. Since the choice of $\ell$ will always be clear from context, we do not indicate it explicitly in the notation $c_i$. 

        \item[{\rm (c)}] 
 By (a) above, we see that $c_i=0$ if and only if the multiplication by $\ell$ from $R_{i-1}$ to $R_i$ is surjective, and $c_i= h_i - h_{i-1}$ if and only if this multiplication is injective. 

    \item[{\rm (d)}] 
Observe that instead of $x,y,z$, we may choose coordinates $u,v, \ell$ so that $S=\mathsf k[u,v,\ell]$. By doing so, we may identify the ring $S/(\ell)$ with the polynomial ring $\mathsf k[u,v]$ in two variables. Let $J$ denote the image of the ideal $I$ under that natural quotient map $S \to S/\langle \ell \rangle \cong \mathsf k[u,v]$. Then we have $$c_i = \dim_{\mathsf k}(\mathsf k[u,v]_i) - \dim_{\mathsf k}(J_i) = (i+1)-\dim_{\mathsf k}(J_i).$$
Note that $J_i$ is precisely the image of $I_i$ under the surjection $S_i \to S_i / \ell S_{i-1} \cong \mathsf k[u,v]_i$. Hence, $J_i = 0$ if and only if $I_i \subseteq \ell S_{i-1}$.
\end{enumerate}
\end{notrem}

\begin{notation}\label{notation:tspeaks}
 We set
$$
  S_R = \max\{ h_i \mid 0 \leq i \leq d\}, \qquad
  t = \min\{i \mid h_i = S_R\}, \qquad
  s = \max\{i \mid h_i = S_R\} .
$$
\end{notation}

Since $h(R)$ is unimodal, $
  h_i = S_R$ for $t \leq i \leq s$. Also, for $i < t$ and $i > s$, we have $h_i < S_R$. We call the set of degrees
$\{t,t+1,\dots,s\}$ as the \emph{peak  degrees} of the $h$-vector. The integer $S_R$ is called the
\emph{Sperner number} of $R$ and $
  \NS_R = s - t + 1$
is the \emph{number of peaks of $h(R)$}.

We record two quick observations in the following two lemmas.
\begin{lemma}\label{lem:tplusS}
With the notation as above, we have $t + s = d$.
\end{lemma}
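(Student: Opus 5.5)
The claim follows from the symmetry of the $h$-vector (Remark~\ref{rem:AG-facts}(a)) together with the definitions of $t$ and $s$ in Notation~\ref{notation:tspeaks}. Unimodality is not needed here; only $h_i=h_{d-i}$ for all $0\le i\le d$ is used.

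The idea is to show that the reflection $i\mapsto d-i$ maps the set of peak degrees $P=\{i \mid h_i=S_R\}$ bijectively onto itself. If $h_i=S_R$, then $h_{d-i}=h_i=S_R$, so $d-i\in P$. The reflection is an involution, so it is a bijection of $P$ onto itself. It also reverses order, so it must send the minimum of $P$ to the maximum of $P$. Hence $d-t=\max P=s$, that is, $t+s=d$.

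To write this out cleanly, I would argue by two inequalities. Since $h_{d-t}=h_t=S_R$, the maximality of $s$ gives $d-t\le s$. Since $h_{d-s}=h_s=S_R$, the minimality of $t$ gives $t\le d-s$. Together these yield $s+t\le d\le s+t$, so $t+s=d$.

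No step poses a real obstacle. The only thing to check is that the indices stay in range: $0\le t,s\le d$ ensures $d-t$ and $d-s$ lie in $[0,d]$, so the symmetry applies.
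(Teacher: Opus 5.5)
Your proof is correct and follows essentially the same argument as the paper: both use only the symmetry $h_i=h_{d-i}$ together with the extremal definitions of $t$ and $s$. Your two inequalities ($d-t\le s$ from the maximality of $s$, and $t\le d-s$ from the minimality of $t$) spell out the step the paper compresses into ``since $h_i<h_s$ for all $i>s$, by the definition of $t$, we must have $t=d-s$.''
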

\begin{proof} Since $h(R)$ is symmetric, we have $h_i = h_{d-i}$ for all $0\leq i\leq d$. Since $h_i<h_s$ for all $i >s$, by the definition of $t$, we must have $t= d-s$, i.e., $t+s=d$.
\end{proof}

When $\NS_R \geq 2$, we have the following.

\begin{lemma}\label{lem:middle}
Let $j_0 = \lfloor (d-1)/2 \rfloor$. If $\NS_R \geq 2$, then
  $t \leq j_0$ and $j_0 + 1 \leq s$ .
In particular, $h_{j_0} = h_{j_0+1} = S_R$.
\end{lemma}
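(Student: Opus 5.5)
The plan is to combine Lemma~\ref{lem:tplusS} with the hypothesis $\NS_R \geq 2$. The hypothesis means $s - t + 1 \geq 2$, that is, $s \geq t+1$. Lemma~\ref{lem:tplusS} gives $t + s = d$. Substituting, we get $d = t + s \geq 2t + 1$, so $t \leq (d-1)/2$. Since $t$ is an integer, this yields $t \leq \lfloor (d-1)/2 \rfloor = j_0$.

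For the other inequality, the same relations give $d = t+s \leq 2s - 1$, so $s \geq (d+1)/2$. It remains to compare $j_0 + 1$ with $(d+1)/2$. We have $j_0 + 1 = \lfloor (d+1)/2 \rfloor \leq (d+1)/2 \leq s$, which gives $j_0+1 \leq s$. To avoid any parity subtlety, one can also check the two cases directly. If $d = 2m$, then $j_0 = m-1$ and $s \geq m + 1/2$, hence $s \geq m+1 > j_0+1$. If $d = 2m+1$, then $j_0 = m$ and $s \geq m+1 = j_0+1$.

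Finally, we have $t \leq j_0 < j_0+1 \leq s$. Recall from the discussion after Notation~\ref{notation:tspeaks} that unimodality (Remark~\ref{rem:Stanley}) forces $h_i = S_R$ for every $t \leq i \leq s$. Both $j_0$ and $j_0+1$ lie in this range, so $h_{j_0} = h_{j_0+1} = S_R$.

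There is no real obstacle here. The only point requiring care is the floor function, and the parity check above handles it.
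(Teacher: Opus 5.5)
Your proof is correct and uses the same ingredients as the paper's: symmetry, which you access through $t+s=d$ from Lemma~\ref{lem:tplusS}, together with unimodality. The paper runs the argument in the opposite order. It first uses the central coordinates of the symmetric unimodal $h$-vector to get $h_{j_0}=h_{j_0+1}=S_R$, and then reads off $t\le j_0<j_0+1\le s$. You instead derive the inequalities arithmetically from $t+s=d$ and $s\ge t+1$, which makes the floor and parity bookkeeping more explicit.
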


\begin{proof}
Note that when $d$ is odd, $j_0$ and $j_0+1$ are the two central coordinates of $h(R)$; and when $d$ is even, $j_0+1$ is the central coordinate of $h(R)$. As noted in Remark \ref{rem:Stanley}, $h(R)$ is unimodal. Therefore, by our hypothesis $\NS_R\geq 2$, and the symmetry and unimodality of $h(R)$, we have $h_{j_0} = h_{j_0+1} = S_R$ when $d$ is odd, and $h_{j_0}=h_{j_0+1} = h_{j_0+2}=S_R$ when $d$ is even. In either case, it follows that $t \leq j_0 < j_0+1 \leq s$ and $h_{j_0} = h_{j_0+1} = S_R$.
\end{proof}

\section{Some results about ideals in two variables}\label{sec:twovar}

In this section, we prove results about Hilbert functions and structure of homogeneous ideals in two variables that will be useful later. We note that all the results of this section hold over any field (not necessarily infinite).
\begin{lemma}\label{lem:growth}
Let $\mathsf k$ be any field (not necessarily infinite), $T = \mathsf k[u,v]$ and $\mathfrak n=\langle u,v\rangle$. Fix $i\geq 0$ and let $V \subseteq T_i$ be a $\mathsf k$-vector subspace of dimension $\delta \geq 1$. Then
$$
  \dim_{\mathsf k} (T_1\cdot V) \;\ge\; \delta + 1 .
$$
Furthermore, equality holds if and only if
\[
  V=g\cdot T_{\,i-e}, \qquad \text{where } g = \gcd(V)
  \text{ and } e=\deg (g)=i-\delta +1 .
\]
\end{lemma}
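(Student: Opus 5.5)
The inequality follows from the observation that $uV$ and $vV$ are both $\delta$-dimensional subspaces of $T_{i+1}$, since multiplication by a nonzero linear form is injective on $T$. Their sum is $T_1\cdot V$, so
\[
\dim_{\mathsf k}(T_1 V) = 2\delta - \dim_{\mathsf k}(uV\cap vV).
\]
It therefore suffices to show $\dim(uV\cap vV)\le \delta-1$, with equality exactly in the stated situation. Let $W=\{f\in T_{i-1} : uf\in V \text{ and } vf\in V\}$. The map $W\to uV\cap vV$, $f\mapsto uvf$, is an isomorphism: if $p=ua=vb$ with $a,b\in V$, then $v\mid a$ since $u,v$ are coprime, so $a=vf$, $b=uf$, and $f\in W$. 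Thus $\dim(T_1V)=2\delta-\dim W$. Since $uW\subsetneq V$ (otherwise $V=uW$ and also $vW\subseteq V$ forces $vW=uW$, impossible for $W\neq 0$ by unique factorisation; the case $W=0$ is trivial as $\delta\ge1$), we get $\dim W\le \delta-1$, which proves the inequality.

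For the equality case, first handle the easy direction. If $V=gT_{i-e}$ with $\deg g=e$, then $\delta=i-e+1$ and $T_1V=gT_{i-e+1}$ has dimension $i-e+2=\delta+1$. For the converse, first factor out $g=\gcd(V)$: write $V=gV'$ with $V'\subseteq T_{i-e}$ and $\gcd(V')=1$. Multiplication by $g$ is injective and commutes with $T_1\cdot$, so equality persists for $V'$. We then must show that a subspace $V'\subseteq T_m$ with $\gcd 1$ and $\dim T_1V'=\dim V'+1$ equals all of $T_m$. Then $\delta=m+1$, giving $e=i-\delta+1$.

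This converse is the main step, and I would argue by induction on $\dim V'$ using the $W$ construction. Equality means $\dim W=\delta-1$, so $uW$ is a hyperplane in $V$ and likewise $vW$. If $\delta=1$, then $V'$ is spanned by one form with gcd 1, so it is a constant and $m=0$, giving $V'=T_0$. If $\delta\ge2$, then $W\neq0$, and I would show $W$ also attains equality. We have $T_1W=uW+vW\subseteq V'$ with $\dim T_1W\ge \dim W+1=\delta$, so $T_1W=V'$. Moreover $\gcd(W)$ divides $\gcd(T_1W)=\gcd(V')=1$. To apply induction I need $\dim T_1W=\dim W+1$, which holds since $T_1W=V'$ has dimension $\delta=\dim W+1$. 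Induction then gives $W=T_{m-1}$, and hence $V'=T_1T_{m-1}=T_m$. The subtle point is making sure the gcd-1 condition descends to $W$, and the divisibility $\gcd(W)\mid\gcd(T_1W)$ handles that.
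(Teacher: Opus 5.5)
Your proof is correct, and it takes a genuinely different route from the paper.

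\textbf{The paper's route.} It works entirely with lex initial terms. The inequality comes from exhibiting $\delta+1$ distinct leading monomials in $T_1\cdot V$. For the equality case, after dividing out the gcd, it shows that the $u$-exponents of the initial terms form a single block $\{p+1,\dots,d\}$ with $d=i-e$. If $p\ge 0$, it derives the relations $vf_r-uf_{r-1}=c_r\,vf_{p+1}$. It then shows that the cofactor $q=f_{p+1}/v^{\delta-1}$ divides every basis element, contradicting $\gcd=1$.

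\textbf{Your route.} You prove the exact formula
\[
\dim_{\mathsf k}(T_1V)=2\delta-\dim_{\mathsf k}W, \qquad W=\{f\in T_{i-1} : uf\in V,\ vf\in V\},
\]
via the isomorphism $W\cong uV\cap vV$, $f\mapsto uvf$. You then settle the equality case by descent: the colon space $W$ inherits both the extremal growth ($T_1W=V'$ with $\dim W=\dim V'-1$) and the condition $\gcd=1$. Induction on the dimension then finishes.

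\textbf{Comparison.} Your argument is shorter and more conceptual. It computes the growth of $T_1V$ exactly rather than only bounding it, and it avoids the delicate bookkeeping in the paper's final step. In exchange, the paper's route gives explicit information about the initial terms, namely the consecutive block of leading exponents.

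\textbf{Points to tidy.} The one spot worth writing out is why $vW=uW$ is impossible for $W\neq 0$. For instance, pick $0\neq f\in W$ divisible by the smallest possible power of $u$. Then $vf=uf'$ with $f'\in W$ forces $f'$ to be divisible by a strictly smaller power of $u$, a contradiction. Also, in the inductive step you write $V$ where you mean $V'$.
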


\begin{proof}
    Consider $T$ with the lexicographic order with $u>v$. Then a $\mathsf k$-basis of $V$ can be written in the form such that the leading terms of the basis elements are $u^{r_1}v^{i-r_1}, \ldots, u^{r_\delta}v^{i-r_\delta}$ with $r_1>r_2>\cdots >r_\delta$. Hence, $T_1\cdot V$ contains polynomials with leading terms $u^{r_1+1}v^{i-r_1}, \ldots, u^{r_\delta+1}v^{i-r_\delta}$ obtained by multiplying each of the basis element of $V$ by $u$, and also the polynomial with the leading term $u^{r_\delta}v^{i-r_\delta+1}$ obtained by multiplying the last basis vector by $v$. Since all these $\delta+1$ leading terms are distinct, it follows that $\dim_{\mathsf k}(T_1 \cdot V) \geq \delta+1$. 

    For the second part, first of all note that given any nonzero homogeneous $g \in  T$, the ideal $\langle g \rangle$ is a free $T$-module of rank one. Thus, $\dim_{\mathsf k}(\langle g\rangle_{\deg(g)+j})= j+1$ for all $j\geq 0$. This shows that if $V= g \cdot T_{i-e}$ as in the statement, then $\dim_{\mathsf k} (T_1\cdot V)= \dim_{\mathsf k} (g\cdot T_{i-e+1})=\dim_{\mathsf k} (\langle g \rangle_{\deg(g)+i-e+1})=(i-e+1)+1=\delta + 1 $. This proves one implication of the ``if and only if'' statement. 
    
    To prove the other implication, assume that $\dim_{\mathsf k} (T_1\cdot V) = \delta + 1$ and let $g=\gcd(V)$ with $\deg(g)=e$. We want to show that $V = g \cdot T_{\,i-e}$ and $e = \deg (g) = i-\delta + 1$.
    
Let $W=\{f/g \mid f\in V\}$. Then we have  $V=g\cdot W$. We claim that $W=T_{i-e}$. If the claim holds, then $\dim_{\mathsf k}(\langle g\rangle_{\deg(g)+(i-\deg(g)+1)})=\dim_{\mathsf k}(T_1 \cdot g\cdot W)=\dim_{\mathsf k}(T_1 \cdot V)= \delta+1$ forces $(i-\deg(g)+1)+1=\delta+1$, i.e., $\deg(g)=i-\delta+1$, as required. Thus, to complete the proof, it suffices to prove that $W=T_{i-e}$, which is what we do now. 

Note that $W$ is a $\mathsf k$-vector subspace of $T_{i-e}$ of dimension $\delta$ with $\gcd(W)=1$. Also, we have $\dim_{\mathsf k}(T_1 \cdot W)= \dim_{\mathsf k}(g \cdot(T_1 \cdot W))= \dim_{\mathsf k}(T_1 \cdot V)=\delta+1$. If $\deg(g)=i$, then we must have $V=\langle g\rangle_{\deg(g)}=\mathsf kg$, and hence $W=T_0$ and we are done. Therefore, we may assume that $\deg(g)<i$. Then $W \subseteq \mathfrak n$. Since $\gcd(W)=1$ and $W\neq 0$, we have that the ideal $\langle W \rangle$ of $T$ generated by the elements of $W$ is $\mathfrak n$-primary. If $\deg(g)=i-1$, then the $\mathfrak n$-primary condition on $\langle W\rangle$ is equivalent to $W=T_1$, and we are done. So, assume that $\deg(g)<i-1$. Set $d=i-e$, and let $W$ have a $\mathsf k$-basis $\{ f_1, \ldots, f_\delta\}$. We may further assume that the initial term of $f_j$ is $u^{r_j} v^{d-r_j}$, with $r_1>r_2>\cdots > r_\delta$, and moreover, after further reduction we may assume that for $j'\neq j$ the monomial $u^{r_{j'}}v^{d-r_{j'}}$ does not occur in $f_j$. Set $A=\{r_1, \ldots, r_\delta\}$ and $A+1=\{r+1 \mid r \in A\}$. Recall that for a $\mathsf k$-subspace $U$ of $T_m$, the set of initial terms of the nonzero elements of $U$ has precisely $\dim_{\mathsf k}(U)$ elements. 

Multiplying the elements $f_j$ by $u$ and by $v$, we see that the initial terms of the elements of $T_1 \cdot W$ contain the monomials $u^{r}v^{d+1-r}$ for $r \in A \cup (A+1)$, and hence $\dim_{\mathsf k}(T_1\cdot W) \geq \vert A \cup (A+1)\vert $. If $A$ is a union of $k$ maximal blocks of consecutive integers, then observe that $|A\cup (A+1)| = \delta+k$. 
Since $\dim_{\mathsf k}(T_1 \cdot W)=\delta+1$, we get $k=1$, i.e., $A$ consists of consecutive integers. Also, we must have $r_1=d$, since otherwise every monomial occurring in each $f_j$ would be divisible by $v$, which contradicts $\gcd(W)=1$. Thus, $A=\{p+1, p+2, \ldots, d\}$, where $p= d-\delta$. 

If $\delta=d+1$, then $W=T_d$ and we are done. So, assume that $\delta \leq d$, i.e., $p\geq 0$. We show that this leads to a contradiction. For $p+1\leq r \leq d$, let $f_r$ denote the basis element with initial term $u^rv^{d-r}$. Then by the reduction mentioned above, we have
$$f_r = u^rv^{d-r}+\sum_{j=0}^{p}a_{r,j}u^jv^{d-j}.$$
Since $\vert A \cup (A+1)\vert =\vert \{p+1, \ldots, d+1\}\vert =\delta+1$ and $\dim_{\mathsf k}(T_1 \cdot W)=\delta+1$, the initial terms of the nonzero elements of $T_1 \cdot W$ are precisely $u^rv^{d+1-r}$ for $p+1 \leq r \leq d+1$. Hence, if $w \in T_1 \cdot W$ is such that every monomial occurring in $w$ has $u$-degree at most $p+1$, then $w \in \mathsf k\cdot vf_{p+1}$. Indeed, such a nonzero $w$ has initial term $u^{p+1}v^{d-p}$, which is also the initial term of $vf_{p+1}$. Subtracting a suitable scalar multiple of $vf_{p+1}$ from $w$ gives an element of $T_1\cdot W$, all of whose monomials have $u$-degree at most $p$, and such an element must be zero.

Now, fix $r$ with $p+2\leq r \leq d$. Note that every monomial appearing in $vf_r-uf_{r-1} \in T_1 \cdot W$ has $u$-degree at most $p+1$. Hence, there exists $c_r \in \mathsf k$ such that
\begin{equation}\label{eq:in3.1}
    vf_r-uf_{r-1}=c_r\, vf_{p+1}.
\end{equation}
Finally, every monomial occurring in $f_{p+1}$ has $u$-degree at most $p+1$, and hence its $v$-degree is at least $d-p-1=\delta-1$. Thus, $v^{\delta-1}\mid f_{p+1}$, and we have $f_{p+1}=v^{\delta-1}q$ for some $q$ with $\deg(q)=p+1$. Since the monomial $u^{p+1}$ occurs in $q$, we have $v \nmid q$. We claim that $q \mid f_r$ for all $p+1 \leq r \leq d$. This is clear for $r=p+1$. If $p+2 \leq r \leq d$ and $q \mid f_{r-1}$, then by the Equation (\ref{eq:in3.1}) above, $q \mid vf_r$. Since $v \nmid q$, we get $q \mid f_r$. Therefore, $q$ divides every element of $W$, which contradicts $\gcd(W)=1$, since $\deg(q)=p+1\geq 1$. This completes the proof.
\end{proof}

The following is an immediate consequence of the lemma above.

\begin{corollary}\label{cor:noninc}
Let $\mathsf k$ be any field (not necessarily infinite), $J \subseteq T= \mathsf k[u,v]$ be a homogeneous ideal and
$c_i = (i+1) - \dim_{\mathsf k} (J_i)$. If $J_{i_0} \neq 0$ for
some $i_0$, then $c_{i+1} \leq c_i$ for all $i \ge i_0$.
\end{corollary}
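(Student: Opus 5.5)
The plan is to apply Lemma~\ref{lem:growth} to the graded pieces of $J$, one degree at a time. The key observation is that $J$ is an ideal, so $T_1\cdot J_i \subseteq J_{i+1}$ for every $i$. If $J_i \neq 0$, then $V = J_i$ is a nonzero subspace of $T_i$ of dimension $\delta = \dim_{\mathsf k}(J_i) \geq 1$. Lemma~\ref{lem:growth} then gives
\[
\dim_{\mathsf k}(J_{i+1}) \geq \dim_{\mathsf k}(T_1\cdot J_i) \geq \dim_{\mathsf k}(J_i)+1 .
\]
Substituting into $c_i = (i+1)-\dim_{\mathsf k}(J_i)$, this reads
\[
c_{i+1} = (i+2)-\dim_{\mathsf k}(J_{i+1}) \leq (i+2)-\dim_{\mathsf k}(J_i)-1 = c_i .
\]

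Next I need the hypothesis $J_i \neq 0$ for every $i \geq i_0$, not just for $i = i_0$. This follows by induction from $J_{i_0}\neq 0$. Indeed, if $J_i \neq 0$, then the inequality above shows $\dim_{\mathsf k}(J_{i+1}) \geq \dim_{\mathsf k}(J_i)+1 \geq 2$, so $J_{i+1}\neq 0$. More directly, $u\cdot f \neq 0$ for any nonzero $f \in J_i$, since $T$ is a domain.

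Combining the two steps, the inequality $c_{i+1}\leq c_i$ holds for all $i\geq i_0$. The only point needing care is that Lemma~\ref{lem:growth} applies in each degree, that is, that $J_i$ is nonzero throughout. The induction handles this, and no assumption on the field is used, matching the statement.
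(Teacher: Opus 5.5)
Your proposal is correct and is essentially the paper's proof: you use $T_1\cdot J_i\subseteq J_{i+1}$ together with Lemma~\ref{lem:growth} to get $\dim_{\mathsf k}(J_{i+1})\ge \dim_{\mathsf k}(J_i)+1$, and you note that $J_i\neq 0$ for all $i\ge i_0$ because $T$ is a domain. The paper records that last observation in a single line before making the same substitution into the $c_i$.
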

\begin{proof}
Since $T$ is a standard graded polynomial ring, $J_{i_0} \neq 0$ forces $J_i\neq 0$ for all $i \geq i_0$. Fix such an $i$. Then
$T_1 \cdot J_i \subseteq J_{i+1}$, and by Lemma~\ref{lem:growth}, we get
$\dim_{\mathsf k} (J_{i+1}) \ge \dim_{\mathsf k} (J_i) + 1$. Hence
\[
  c_{i+1} = (i+2) - \dim_{\mathsf k} (J_{i+1}) \le (i+2) - \dim_{\mathsf k} (J_i) - 1 = (i+1) - \dim_{\mathsf k} (J_i) = c_i . \qedhere
\]
\end{proof}

The next lemma says that if an ideal of $\mathsf k[u,v]$ is principal in one degree, then it is contained in that principal ideal in all lower degrees as well.

\begin{lemma}\label{lem:down}
Let $\mathsf k$ be any field (not necessarily infinite), $J \subseteq T= \mathsf k[u,v]$ be a homogeneous ideal, $\gamma \in \mathbb Z_{\geq 0}$, and $g \in T_{\gamma}$
be nonzero. Suppose that
$J_m = g \cdot T_{m - \gamma}$ for some $m \geq \gamma$. Then
$J_i \subseteq \langle g\rangle_i$ for all $i \leq m$.
\end{lemma}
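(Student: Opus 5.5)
Fix $i\le m$ and an arbitrary nonzero $f\in J_i$; the goal is to show $g\mid f$. Since $J$ is an ideal, $T_{m-i}\cdot f\subseteq J_m=g\cdot T_{m-\gamma}$. In particular, for each monomial $w\in T_{m-i}$ we get $g\mid wf$. It suffices to use the two monomials $u^{m-i}$ and $v^{m-i}$. (If $i=m$ there is nothing to prove, so assume $i<m$.)

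The key step uses unique factorization in $T$. Write $g=u^a v^b g'$ with $\gcd(g',uv)=1$. From $g\mid u^{m-i}f$, the factor $g'$ is coprime to $u^{m-i}$, so $g'\mid f$. Likewise $v^b\mid u^{m-i}f$ forces $v^b\mid f$. Symmetrically, $g\mid v^{m-i}f$ gives $u^a\mid f$. The factors $g'$, $u^a$, $v^b$ are pairwise coprime, so $g=u^av^bg'$ divides $f$. Hence $f\in\langle g\rangle_i$.

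A cleaner phrasing of the same argument: $g\mid\gcd(u^{m-i}f,\,v^{m-i}f)=f\cdot\gcd(u^{m-i},v^{m-i})=f$. This uses that $\gcd$ is multiplicative in a UFD and that $u^{m-i}$ and $v^{m-i}$ are coprime. Note also that if $i<\gamma$, the conclusion forces $J_i=0$. This is consistent, because a nonzero $f$ divisible by $g$ must have degree at least $\gamma$.

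There is no real obstacle here. The only point needing care is to use two coprime multipliers rather than a single one, since one multiplier alone could absorb factors of $g$. The hypothesis that $J_m$ equals all of $g\cdot T_{m-\gamma}$ is stronger than needed: only the containment $J_m\subseteq\langle g\rangle$ is used.
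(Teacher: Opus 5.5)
Your proof is correct and takes essentially the paper's route: both arguments use only that $g$ divides $u^{m-i}f$ and $v^{m-i}f$, then apply unique factorization in $T$. The paper handles $g$ one prime power $p_k^{e_k}$ at a time, noting that each prime factor fails to divide one of $u,v$. You instead split $g$ as $u^a v^b g'$, or equivalently use $\gcd(u^{m-i},v^{m-i})=1$, which amounts to the same argument.
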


\begin{proof}
If $\gamma=0$, then $g$ is a nonzero element of $\mathsf k$, and the conclusion is clear. So, we may assume that $\gamma\geq 1$. Now, let $i \leq m$ and $q \in J_i$. Then by hypothesis, 
$g \mid q\, u^{m-i}$ and $g \mid q\, v^{m-i}$. Let $g = \prod_{k=1}^{r} p_k^{e_k}$, where each $p_k$ is a prime element in $T$, and none of them is an associate of the other. Since $\gamma\geq 1$, we have $r\geq 1$. Now, fix any $k$. Then $p_k^{e_k}$ divides
both $q\,u^{m-i}$ and $q\,v^{m-i}$. Note that $p_k$ does not divide
both $u$ and $v$. Without loss of generality, let $p_k \nmid u$. Then $p_k^{e_k}$ is coprime to $u^{m-i}$, and hence 
$p_k^{e_k}\mid q$. Since $k$ was arbitrary and the $p_k^{e_k}$ are pairwise coprime, we get $g$ divides $q$. This completes the proof.
\end{proof}

\section{A criterion for WLP}\label{sec:constant}

In this section, we present the criterion we will be using to test WLP for Artinian Gorenstein algebras whose $h$-vector has at least three peaks. We prepare it with some results below. 

\begin{lemma}\label{lem:dualcorank}
Let $R=S/I$ be an Artinian Gorenstein algebra of codimension three and socle degree $d$. Then with the notation as in Notation/Remark \ref{notrem:c_i} and Notation \ref{notation:tspeaks}, given any $j\geq 0$, we have
\[
  \dim_{\mathsf k} \left((0 :_R \ell)_j\right) = c_{\,d-j}.
\]
\end{lemma}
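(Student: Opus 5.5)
The plan is to use the perfect pairing of Remark~\ref{rem:AG-facts}(a) to identify $(0:_R\ell)_j$ with the orthogonal complement of $\ell R_{d-j-1}$ in $R_{d-j}$. The cokernel of $\times\ell\colon R_{d-j-1}\to R_{d-j}$ is $(R/\langle\ell\rangle)_{d-j}$, so that complement has dimension $c_{d-j}$.

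Concretely, fix $j$ with $0\le j\le d$; the cases $j<0$ or $j>d$ are trivial, since then both sides vanish (for $j>d$ we have $d-j<0$ and $c_{d-j}=0$). For $j=d$, every element of $R_d$ is killed by $\ell$, so the left side is $h_d=1$, while $c_0=1$. For $0\le j\le d-1$, consider the pairing $R_j\times R_{d-j}\to R_d\cong\mathsf k$. I claim that
\[
(0:_R\ell)_j=\{a\in R_j \mid ab=0 \text{ for all } b\in \ell R_{d-j-1}\}=(\ell R_{d-j-1})^{\perp}.
\]
The inclusion $\subseteq$ is clear, since $a\ell=0$ gives $a\ell c=0$ for every $c\in R_{d-j-1}$. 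For $\supseteq$, suppose $a\ell c=0$ for all $c\in R_{d-j-1}$. Then $a\ell\in R_{j+1}$ pairs to zero with all of $R_{d-j-1}$, so perfectness of the pairing in degrees $j+1$ and $d-j-1$ forces $a\ell=0$.

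Since the pairing $R_j\times R_{d-j}\to\mathsf k$ is perfect, the orthogonal complement of a subspace $U\subseteq R_{d-j}$ has dimension $h_{d-j}-\dim U$. Applying this with $U=\ell R_{d-j-1}$ gives
\[
\dim_{\mathsf k}(0:_R\ell)_j=h_{d-j}-\dim_{\mathsf k}(\ell R_{d-j-1})=\dim_{\mathsf k}\bigl(R_{d-j}/\ell R_{d-j-1}\bigr)=c_{d-j},
\]
using the exact sequence of Notation/Remark~\ref{notrem:c_i}(a). There is no real obstacle here. The only point needing care is the reverse inclusion, which uses perfectness in degree $j+1$, together with the boundary degrees.
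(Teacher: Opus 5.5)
Your proposal is correct and follows essentially the same route as the paper: you identify $(0:_R\ell)_j$ with the orthogonal complement of $\ell R_{d-j-1}$ under the perfect pairing $R_j\times R_{d-j}\to\mathsf k$, using perfectness in degrees $j+1$ and $d-j-1$ for the reverse inclusion, and then count dimensions to get $h_{d-j}-\dim_{\mathsf k}(\ell R_{d-j-1})=c_{d-j}$. Your explicit handling of the boundary cases $j=d$ and $j>d$ is a small extra bit of care that the paper leaves implicit.
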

\begin{proof}
Let $j\geq 0$ be fixed. Consider the perfect pairing $R_j \times R_{d-j} \to R_d \cong \mathsf k$ given by the multiplication in $R$. For $a \in R_j$ we have 
$$
  \ell a = 0
  \iff \ell a \cdot R_{d-j-1} = 0
  \iff a \cdot \ell R_{d-j-1} = 0 ,
$$
where the first equivalence holds because $\ell a \in R_{j+1}$ and the pairing
$R_{j+1} \times R_{d-j-1} \to \mathsf k$ is perfect. Therefore, $(0 :_R \ell)_j$ is
exactly the orthogonal complement of the subspace $\ell R_{d-j-1} \subseteq
R_{d-j}$ under the pairing $R_j \times R_{d-j} \to \mathsf k$. Hence, we have
$$
  \dim_{\mathsf k} \left((0 :_R \ell)_j\right)
  = \dim_{\mathsf k} (R_j) - \dim_{\mathsf k} (\ell R_{d-j-1})=\dim_{\mathsf k} (R_{d-j}) - \dim_{\mathsf k} (\ell R_{d-j-1}),
$$
where the last equality holds because $h(R)$ is symmetric. The proof is therefore complete, since the right-hand side is precisely the dimension of $\operatorname{coker}(R_{d-j-1} \xrightarrow[]{\times \ell } R_{d-j})$, which equals $c_{d-j}$ by definition.
\end{proof}

\begin{proposition}\label{prop:corank}
With the notation as in the lemma above, given any $i\geq 1$, we have
$$
  c_i-c_{d+1-i} =h_i - h_{i-1}.
$$
\end{proposition}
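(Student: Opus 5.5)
The plan is to read off dimensions in the four-term exact sequence of Notation/Remark~\ref{notrem:c_i}(a) in degree $i$, and then rewrite the kernel term using Lemma~\ref{lem:dualcorank}. Fix $i\geq 1$ and take the degree $i$ strand of
$$0 \to (0 :_R \ell)(-1) \to R(-1) \xrightarrow{\times \ell} R \to R/\langle \ell \rangle \to 0.$$
This gives an exact sequence of finite-dimensional $\mathsf k$-vector spaces
$$0 \to (0:_R \ell)_{i-1} \to R_{i-1} \xrightarrow{\times \ell} R_i \to (R/\langle \ell\rangle)_i \to 0.$$
The alternating sum of dimensions vanishes, so
$$\dim_{\mathsf k}\bigl((0:_R\ell)_{i-1}\bigr) - h_{i-1} + h_i - c_i = 0.$$
Equivalently, $c_i - \dim_{\mathsf k}((0:_R\ell)_{i-1}) = h_i - h_{i-1}$.

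Next, apply Lemma~\ref{lem:dualcorank} with $j = i-1 \geq 0$. This gives $\dim_{\mathsf k}((0:_R\ell)_{i-1}) = c_{d-(i-1)} = c_{d+1-i}$. Substituting into the previous identity yields $c_i - c_{d+1-i} = h_i - h_{i-1}$, as claimed.

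The only point needing care is the edge cases. When $i-1 > d$, or when $d+1-i<0$, both sides should still make sense. Lemma~\ref{lem:dualcorank} is stated for all $j\ge 0$. If $d-j<0$, both $(0:_R\ell)_j$ and $c_{d-j}$ must be read as $0$; the kernel is $0$ here because $R_j=0$ for $j>d$, so the identity persists. For $i=d+1$, note $j=d$: $(0:_R\ell)_d=R_d$ has dimension $1=c_0$, consistent. I expect no real obstacle: the statement is a direct bookkeeping consequence of exactness combined with the duality lemma.
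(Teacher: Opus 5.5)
Your proposal is correct and follows the paper's proof exactly: take the degree-$i$ strand of the four-term exact sequence, use the vanishing of the alternating sum of dimensions, and rewrite $\dim_{\mathsf k}((0:_R\ell)_{i-1})$ as $c_{d+1-i}$ via Lemma~\ref{lem:dualcorank} with $j=i-1$. Your check of the edge cases $i\geq d+1$, where $(0:_R\ell)_{i-1}$ and $c_{d+1-i}$ are read as $0$ for $i>d+1$ and $(0:_R\ell)_d=R_d$, $c_0=1$ for $i=d+1$, is correct; the paper leaves this implicit.
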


\begin{proof}
Consider the four term exact sequence of graded $R$-modules
$$
  0 \longrightarrow (0 :_R \ell)(-1) \longrightarrow R(-1)
    \xrightarrow{\ \times \ell\ } R \longrightarrow R/\langle \ell \rangle \longrightarrow 0.
$$
Using the additivity of Hilbert series over exact sequences, we
get
$$\dim_{\mathsf k} ((0 :_R \ell)_{i-1}) - h_{i-1} + h_i - c_i = 0 .$$
By Lemma~\ref{lem:dualcorank}, $\dim_{\mathsf k} ((0 :_R \ell)_{i-1}) = c_{\,d-(i-1)} =
c_{\,d+1-i}$. This completes the proof.
\end{proof}

The next proposition describes the relation among the $c_i$ when $\NS_R \geq 2$. In particular, we show that the $c_i$ are constant for $i \in \{t+1,\ldots,s\}$.

\begin{proposition}\label{prop:constant}
Let $R=S/I$ be an Artinian Gorenstein algebra of codimension three, and $\ell$ be a general linear form in $R$. Let the notation be as in Notation/Remark \ref{notrem:c_i} and Notation \ref{notation:tspeaks}.
Assume that $s \ge t+1$, i.e., $\NS_R \ge 2$. Then the following hold.
\begin{enumerate}
\item[{\rm (a)}] Given any $t+1\leq i \leq s$, we have $c_i = c_{\,d+1-i}$. 
\item[{\rm (b)}] The assignment $i \mapsto d+1-i$ defines an order reversing function from $\{t+1,\dots,s\}$ to itself.
\item[{\rm (c)}] Given any $i\geq t+1$, we have $c_i \geq c_{i+1}$. 
\item[{\rm (d)}]  $c_{t+1} = c_{t+2} = \dots = c_s $.
\end{enumerate}
\end{proposition}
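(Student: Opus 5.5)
The plan is to prove the four parts in order, with (d) a formal consequence of (a)--(c).

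For (a), apply Proposition~\ref{prop:corank}: for $t+1\le i\le s$ both $i$ and $i-1$ lie in $\{t,\dots,s\}$. Hence $h_i=h_{i-1}=S_R$, so $c_i-c_{d+1-i}=0$.

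For (b), use Lemma~\ref{lem:tplusS}, which gives $d=t+s$. Then $i\mapsto d+1-i = t+s+1-i$ sends $t+1$ to $s$ and $s$ to $t+1$. It is an affine map with slope $-1$, so it maps $\{t+1,\dots,s\}$ bijectively onto itself and reverses order.

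For (c), use Corollary~\ref{cor:noninc} applied to $J\subseteq \mathsf k[u,v]$ with $i_0=t+1$. It suffices to show $J_{t+1}\neq 0$.
\begin{itemize}
\item Suppose instead $J_{t+1}=0$. Then $c_{t+1}=t+2$, while $c_{t+1}\le h_{t+1}=S_R$ since $R/\langle\ell\rangle$ is a quotient of $R$.
\item So it is enough to check $S_R < t+2$, i.e.\ that $I_{t+1}\not\subseteq \ell S_t$. Equivalently, we need some $c_i<i+1$ for $i\le t+1$, so that $J$ is nonzero in some degree $\le t+1$.
\item The cleanest route is through the peak. If $J_{t+1}=0$, then $c_{t+1}=t+2$, and by (a), $c_s=c_{t+1}=t+2$, so $J_s$ has dimension $s+1-(t+2)=s-t-1$.
\item Also $c_i\le h_i\le S_R\le \binom{t+2}{2}$, while $I_t\ne0$ is forced whenever $S_R<\binom{t+2}{2}$. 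This inequality holds because $h$ is non-decreasing up to $t$, and in codimension three Macaulay-type growth forces $I$ to start in degree at most $t$ once the function plateaus. Otherwise $h_{t+1}=S_R=\binom{t+2}{2}<\binom{t+3}{2}$ already gives $I_{t+1}\ne0$, and we only need that its image mod a general $\ell$ is nonzero.
\end{itemize}

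The general-$\ell$ claim is the key point: a nonzero form $f\in I_{t+1}$ is divisible by only finitely many linear forms up to scalar. Since $\mathsf k$ is infinite, a general $\ell$ does not divide $f$, so $J_{t+1}\ni \bar f\neq 0$. Thus the real content is that $I_{i}\neq 0$ for some $i\le t+1$. This follows because $h_{t+1}=S_R\le h_{t}\le\binom{t+2}{2}<\binom{t+3}{2}$, so $I_{t+1}\neq 0$. With $J_{t+1}\ne 0$, Corollary~\ref{cor:noninc} gives $c_{i+1}\le c_i$ for all $i\ge t+1$. I expect this step, making sure the relevant component of $J$ is nonzero for a general $\ell$, to be the main obstacle, and the argument above via $h_{t+1}<\dim S_{t+1}$ together with the avoidance of finitely many linear factors is what I would use.

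For (d), combine (b) and (c). By (c), the sequence $c_{t+1}\ge c_{t+2}\ge\dots\ge c_s$ is non-increasing. By (a) and (b), $c_{t+1}=c_{d+1-(t+1)}=c_s$. A non-increasing sequence with equal endpoints is constant, so $c_{t+1}=c_{t+2}=\dots=c_s$.
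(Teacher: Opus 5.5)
Your proof is correct and is essentially the paper's argument: (a) comes from Proposition~\ref{prop:corank}, (b) from $t+s=d$, (c) from $I_{t+1}\neq 0$ (because $h_{t+1}=h_t\le\binom{t+2}{2}<\binom{t+3}{2}$) together with a general $\ell$ avoiding the finitely many linear factors of a nonzero element of $I_{t+1}$, so that $J_{t+1}\neq 0$ and Corollary~\ref{cor:noninc} applies, and (d) by combining these. You should, however, delete the exploratory bullets in (c), because they assert things that are false and that your final argument does not need: $S_R<t+2$ is not equivalent to $I_{t+1}\not\subseteq \ell S_t$ and fails e.g.\ for $h=(1,3,6,6,3,1)$, and $S_R<\binom{t+2}{2}$ fails for $h=(1,3,3,1)$, where $I$ starts in degree $t+1$ rather than $t$.
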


\begin{proof}
(a) Let $t+1 \le i \le s$. Then $i-1, i \in \{t,\dots,s\}$, and we have $h_{i-1} = h_i = S_R$, i.e., $h_i - h_{i-1} = 0$. By Proposition \ref{prop:corank}, we get $c_i = c_{d+1-i}$. 

(b) It is enough to show that the map interchanges $t+1$ and $s$. By Lemma~\ref{lem:tplusS}, we have $t+s = d$.  Therefore, $t+1 \mapsto  d+1-(t+1) = d - t = s$ and $s \mapsto d+1-s = t+1$. 

(c) We plan to use Corollary \ref{cor:noninc}. Firstly, note that $I_{t+1}\neq 0$. This is because if $I_{t+1}=0$, then $$S_R = h_{t+1}=\dim_{\mathsf k}(S_{t+1})=\binom{(t+1)+2}{2}>\binom{t+2}{2}=\dim_{\mathsf k}(S_t)\geq h_t = S_R,$$ which is a contradiction. Recall that by Notation/Remark \ref{notrem:c_i}, $J_{t+1}=0$ if and only if $I_{t+1}\subseteq \ell S_t$. Thus, to apply Corollary \ref{cor:noninc}, we show that for a general linear form $\ell$, $I_{t+1}\not\subseteq \ell S_{t}$. Let $q\in I_{t+1}$ be any nonzero element. Since $S$ is a UFD, $q$ has finitely many linear factors. Since $\mathsf k$ is infinite, for a general $\ell \in S_1$, we see that $I_{t+1}\not\subseteq \ell S_t$. Hence $J_{t+1}\neq 0$. The statement now follows by applying Corollary \ref{cor:noninc}. 

(d) By (c), we have $c_{t+1} \geq c_{t+2}\geq \cdots \geq c_s$. By (a) and (b), we have $c_{t+1} = c_{d+1-(t+1)} = c_s$. Combining the two facts proves (d).
\end{proof}

Note that the assumption that $\mathsf k$ is infinite was used in the proof of part (c) in the above proposition. 

We denote the common value $c_{t+1} = c_{t+2} = \dots = c_s $ by $\gamma$.  The next result says that $\gamma$ being zero characterizes WLP.

\begin{corollary}\label{cor:wlp-gamma}
Let $\NS_R \ge 2$. Then $R$ has the weak Lefschetz property if and only if
$\gamma = 0$.
\end{corollary}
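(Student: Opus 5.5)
By Remark~\ref{rem:Stanley} the $h$-vector of $R$ is unimodal. So Remark~\ref{rem:middle} applies: $R$ has the WLP if and only if, for a general linear form $\ell$, the map $R_{j_0}\xrightarrow{\times\ell}R_{j_0+1}$ is injective, where $j_0=\lfloor (d-1)/2\rfloor$. The plan is to translate this injectivity into a statement about $c_{j_0+1}$ and then identify $c_{j_0+1}$ with $\gamma$.

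First, Lemma~\ref{lem:middle} gives $t\le j_0$ and $j_0+1\le s$, and also $h_{j_0}=h_{j_0+1}=S_R$. Next, by Notation/Remark~\ref{notrem:c_i}(c), the map $\times\ell\colon R_{j_0}\to R_{j_0+1}$ is injective exactly when $c_{j_0+1}=h_{j_0+1}-h_{j_0}=0$. Finally, since $t+1\le j_0+1\le s$, Proposition~\ref{prop:constant}(d) gives $c_{j_0+1}=\gamma$ for the general $\ell$. Combining these three facts, for a general $\ell$ the middle map is injective if and only if $\gamma=0$.

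The one point needing care is the phrase ``general $\ell$''. Proposition~\ref{prop:constant} holds for $\ell$ in a nonempty Zariski open set, namely those $\ell$ with $I_{t+1}\not\subseteq \ell S_t$. Remark~\ref{rem:middle} refers to $\ell$ in another general open set. Since $\mathsf k$ is infinite, the two open sets intersect, so we may choose a single $\ell$ lying in both. With that choice, $\gamma$ is the value of $c_{j_0+1}$ at $\ell$, and injectivity at the middle degree for that $\ell$ is equivalent to $\gamma=0$.

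It also helps to note that $\gamma$ is the generic value of $c_{j_0+1}$. The rank of the middle map is lower semicontinuous in $\ell$, so its maximal rank is attained on an open set, and $c_{j_0+1}$ takes its minimum there. Hence ``$\gamma=0$'' does not depend on which general $\ell$ is chosen. Apart from this bookkeeping about the open sets, the argument is a direct chaining of the cited results.
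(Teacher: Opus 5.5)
Your proposal is correct and follows the paper's proof: Lemma~\ref{lem:middle} puts $j_0+1$ in the peak range with $h_{j_0}=h_{j_0+1}$, so injectivity of the middle map is equivalent to $c_{j_0+1}=0$. Proposition~\ref{prop:constant} identifies $c_{j_0+1}$ with $\gamma$, and Remark~\ref{rem:middle} finishes. Your extra step of intersecting the two open sets of general linear forms makes explicit what the paper leaves implicit; it is harmless and slightly more careful.
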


\begin{proof}
Put $j_0 = \lfloor (d-1)/2 \rfloor$ as before. By Lemma~\ref{lem:middle},  $h_{j_0} = h_{j_0+1} = S_R$. Hence, the map
$\times \ell \colon R_{j_0} \to R_{j_0+1}$ is injective if and only if it is bijective, if and only
if its cokernel is zero, i.e., if and only if $c_{j_0+1} = 0$. Again by
Lemma~\ref{lem:middle}, we have $t+1 \le j_0+1 \le s$. Hence,
Proposition~\ref{prop:constant} gives $c_{j_0+1} = \gamma$. The result now
follows from Remark~\ref{rem:middle}.
\end{proof}

The above criterion will be used in the next section to prove our main theorem. We remark that even though our main theorem is proved under the hypothesis $\NS_R \geq 3$, the criterion proved in Corollary \ref{cor:wlp-gamma} only requires $\NS_R\geq 2$.
\section{WLP for Artinian Gorenstein algebras with at least 3 peaks}
\label{sec:divisor}

In this section, we prove the main theorem of our article (see Theorem \ref{thm:main}) using the criterion proved in Corollary~\ref{cor:wlp-gamma}.   
\begin{proposition}\label{prop:principal}
Let $R=S/I$ be an Artinian Gorenstein algebra of codimension three with $\NS_R \ge 3$, and $\ell$ be a general linear form in $R$. Let the notation be as in Notation/Remark \ref{notrem:c_i} and Notation \ref{notation:tspeaks}, with $T=\mathsf k[u,v]$.
If $\gamma \ge 1$, then there exists a nonzero element $g \in T_{\gamma}$ such that
\begin{enumerate}
    \item[{\rm (a)}] $J_i = g \cdot  T_{\,i-\gamma}$ for $t+1 \le i \le s$.
    \item[{\rm (b)}] $J_i \subseteq \langle g\rangle_i$ for all $i\leq s$.
\end{enumerate}
\end{proposition}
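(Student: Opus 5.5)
We use the equality case of Lemma~\ref{lem:growth} together with the constancy of the $c_i$ from Proposition~\ref{prop:constant}.

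For $t+1\le i\le s$ we have $c_i=\gamma$, so $\dim_{\mathsf k}(J_i)=i+1-\gamma$. Since $\NS_R\ge 3$, we have $s\ge t+2$, so both $t+1$ and $t+2$ lie in the range $\{t+1,\dots,s\}$. For any $i$ with $t+1\le i\le s-1$ we get
\[
\dim_{\mathsf k}(J_{i+1})=\dim_{\mathsf k}(J_i)+1 .
\]
Next, $J_i\neq 0$ for $i\ge t+1$; this was shown in the proof of Proposition~\ref{prop:constant}(c) for a general $\ell$. So put $\delta=\dim_{\mathsf k}(J_i)=i+1-\gamma\ge 1$. The inclusions
\[
T_1\cdot J_i\subseteq J_{i+1}
\]
together with Lemma~\ref{lem:growth} force
\[
\delta+1\le \dim_{\mathsf k}(T_1\cdot J_i)\le \dim_{\mathsf k}(J_{i+1})=\delta+1 .
\]
Hence equality holds throughout, so $T_1\cdot J_i=J_{i+1}$. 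By the equality case of Lemma~\ref{lem:growth}, $J_i=g_i\cdot T_{i-e_i}$, where $g_i=\gcd(J_i)$ and
\[
e_i=i-\delta+1=\gamma .
\]
This gives (a) for each $i$ with $t+1\le i\le s-1$, with a degree-$\gamma$ form $g_i$ that a priori depends on $i$.

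The main point is to make $g$ independent of $i$ and to include the endpoint $i=s$. Since $J_{i+1}=T_1\cdot J_i=g_i\cdot T_{i+1-\gamma}$, the form $g_i$ also generates $J_{i+1}$ in the required sense. The gcd of $g_i\cdot T_{i+1-\gamma}$ is $g_i$ up to a scalar, so $g_{i+1}=g_i$ up to a unit. Starting from $i=t+1$ (valid since $t+1\le s-1$) and inducting, we take $g=g_{t+1}$ and obtain $J_i=g\cdot T_{i-\gamma}$ for all $t+1\le i\le s$, including $i=s$ via $J_s=T_1\cdot J_{s-1}$. We also need $i-\gamma\ge 0$ in this range. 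This holds because $\delta\ge 1$ gives $i\ge\gamma$. Here $\gamma\ge 1$ ensures $g$ is a genuine nonconstant form, though the argument itself does not need this. This proves (a).

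For (b), we apply Lemma~\ref{lem:down} with $m=s$. Part (a) gives $J_s=g\cdot T_{s-\gamma}$ with $s\ge\gamma$, so the lemma yields $J_i\subseteq\langle g\rangle_i$ for all $i\le s$.

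I expect the only delicate points to be checking that $J_i\ne0$ throughout the range, which needs the generality of $\ell$ as in Proposition~\ref{prop:constant}(c), and identifying the gcd's across consecutive degrees. The rest is bookkeeping with Lemma~\ref{lem:growth}.
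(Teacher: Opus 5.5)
Your proposal is correct and follows essentially the same route as the paper: the constant value $\gamma$ of $c_i$ gives $\dim_{\mathsf k}(J_{i+1})=\dim_{\mathsf k}(J_i)+1$, the equality case of Lemma~\ref{lem:growth} forces $J_i=g_i\cdot T_{i-\gamma}$ with $g_i$ independent of $i$ up to scalars, and Lemma~\ref{lem:down} with $m=s$ gives (b). Your checks that $J_i\neq 0$ via the generality of $\ell$ and that $i-\gamma\ge 0$ match the paper's bookkeeping ($\gamma\le t+1$).
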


\begin{proof}
(a) By Proposition~\ref{prop:constant} and Notation/Remark \ref{notrem:c_i}, we have
\[
  \dim_{\mathsf k} (J_i) = (i+1) - \gamma \qquad \text{for } t+1 \le i \le s .
\]
In particular, $\dim_{\mathsf k} (J_{i+1}) = \dim_{\mathsf k} (J_i) + 1$ for
$t+1 \le i \le s-1$. Note that since $\NS_R\geq 3$, we have $t+1\leq s-1$, and hence the range $t+1 \le i \le s-1$ is nonempty. 
Now, by the proof of
Proposition~\ref{prop:constant}(c), we know that $J_{t+1} \ne 0$. Thus, $\dim_{\mathsf k} (J_{t+1}) = t+2-\gamma \ge 1$. This forces $\gamma \le t+1$, and hence $\dim_{\mathsf k} (J_i) \ge 1$ for all $t+1\leq i\leq s-1$.

Fix an $i$ with $t+1 \le i \le s-1$, and let $V=J_i$ with $\dim_{\mathsf k}(V)=\delta$. Then by Lemma~\ref{lem:growth} and the fact that $T_1 \cdot J_i \subseteq J_{i+1}$, we get
\[
  \delta + 1 \;\le\; \dim_{\mathsf k} (T_1 \cdot J_i)
             \;\le\; \dim_{\mathsf k} (J_{i+1}) = \delta+1. 
\]
Thus, both the inequalities above are equalities. This forces $T_1 \cdot J_i=J_{i+1}$. From $\dim_{\mathsf k}(T_1 \cdot J_i) =\delta+1$ and
Lemma~\ref{lem:growth}, we conclude that
$J_i = g_i \cdot T_{\,i-\gamma}$, where $g_i = \gcd(J_i)$ and $\deg(g_i)=i - \delta + 1 = \gamma$. Also, from $\dim_{\mathsf k}(J_{i+1})=\delta+1$, we get $J_{i+1} = T_1 \cdot J_i = g_i \cdot T_{\,i+1-\gamma}$. Observe that this also says that for $t+1 \leq i \leq s-1$, $\gcd(J_i)$ is equal up to a nonzero scalar multiple. Thus, setting $g=g_{t+1}$, we get 
$J_i = g \cdot T_{\,i-\gamma}$ for all $i$ with $t+1 \le i \le s$. 

(b) Applying Lemma~\ref{lem:down} with $m = s$, we get
$J_i \subseteq \langle g\rangle _i$ for every $i \leq s$.
\end{proof}
Observe that the above proposition crucially uses the hypothesis $\NS_R\geq 3$ in its proof.  

Let $I_{\le s}$ denote the set of all elements of $I$ of degree $\leq s$. 

\begin{proposition}\label{prop:gcd-nonconstant}
Let $\mathsf k$ be an infinite field, and $R = S/I$ be a standard graded Artinian
Gorenstein $\mathsf k$-algebra of codimension three such that $\NS_R \geq 3$. If
$\gamma \ge 1$, then $\gcd(I_{\leq s})$ has positive degree.
\end{proposition}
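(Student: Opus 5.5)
The plan is to argue by contradiction. I would combine Proposition~\ref{prop:principal}(b), which holds for a general linear form $\ell$, with the fact that an ideal of $S$ whose generators have no common factor cuts out only finitely many points of $\mathbb P^2$. A general line misses those points, and this is incompatible with the images of these forms having the common factor $g$ on that line.

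\textbf{Step 1 (setting up the contradiction).} Let $K\subseteq S$ be the ideal generated by $I_{\le s}$, and suppose $\gcd(I_{\le s})=1$. Since $S$ is a UFD of dimension $3$, no height-one prime contains $K$, so $K$ has height at least $2$. Hence the zero set $Z=V(K)$ in $\mathbb P^2_{\bar{\mathsf k}}$ is finite, possibly empty. Here I pass to an algebraic closure only to speak of points; all forms still have coefficients in $\mathsf k$.

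\textbf{Step 2 (choosing $\ell$).} For each point $P\in Z$, the set of $\ell=(a,b,c)$ with $\ell(P)=0$ is a proper linear subspace of $\bar{\mathsf k}^3$. Its trace on $\mathsf k^3$ is contained in a proper Zariski-closed subset: take a nonzero $\bar{\mathsf k}$-linear equation and expand it in a $\mathsf k$-basis of $\bar{\mathsf k}$. The conditions defining ``general'' in Propositions~\ref{prop:constant} and~\ref{prop:principal} and in the definition of $\gamma$ are also nonempty Zariski-open conditions on $\ell$. Indeed, each $c_i$ is upper semicontinuous in $\ell$, being a corank of a matrix with entries linear in $\ell$, and $\gamma$ is the generic value. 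Since $\mathsf k$ is infinite, a finite intersection of nonempty open subsets of $\mathsf k^3$ is nonempty. So I can pick one $\ell$ that satisfies all conclusions of Proposition~\ref{prop:principal} and whose line $L=V(\ell)$ contains no point of $Z$. I expect this bookkeeping, namely that ``general'' in the earlier results really is an open condition compatible with avoiding $Z$, to be the only delicate point. If needed, I would spell it out via the semicontinuity of the ranks of $\times\ell$.

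\textbf{Step 3 (deriving the contradiction).} By Proposition~\ref{prop:principal}(b), with $\gamma\ge1$, the image in $T=\mathsf k[u,v]$ of every $f\in I_{\le s}$ lies in $J_{\deg f}\subseteq\langle g\rangle$, where $g\in T_\gamma$ is nonzero. Since $\gamma\ge1$, $g$ has a zero $Q\in\mathbb P^1_{\bar{\mathsf k}}\cong L_{\bar{\mathsf k}}$. Identifying $T$ with the coordinate ring of $L$ via $S/\langle\ell\rangle\cong T$, every element of $I_{\le s}$ vanishes at the point $Q\in L$. Hence $Q\in Z\cap L$, contradicting the choice of $\ell$. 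Therefore $\gcd(I_{\le s})$ has positive degree.
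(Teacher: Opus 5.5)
Your proof is correct and is essentially the paper's argument. The paper argues that if every minimal prime of $\langle I_{\le s}\rangle$ had height two, a general $\ell$ would avoid them all, making $\langle I_{\le s}\rangle+\langle \ell\rangle$ of height $3$; this contradicts $\langle I_{\le s}\rangle+\langle\ell\rangle\subseteq\langle \tilde g,\ell\rangle$, which has height $2$. Your version is the Nullstellensatz translation of this into finitely many points of $\mathbb P^2_{\bar{\mathsf k}}$ and a root of $g$ on the line $V(\ell)$, and your explicit semicontinuity bookkeeping for choosing a single $\ell$ meeting all the genericity conditions spells out what the paper only asserts.
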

\begin{proof}
The idea of the proof is to show that $I_{\leq s}$ is contained in a prime ideal of height $1$ and then use the fact that height $1$ primes in $S$ are principal.

Let $f= \gcd(I_{\leq s})$ and $\deg(f)= \gamma_0$. 
Let $\tilde \ell$ be a general linear form in $S$. Choosing coordinates such that $\tilde \ell =z$, we may assume that the element $g \in T_{\gamma}$ in Proposition~\ref{prop:principal} above has a lift $\tilde g \in S_{\gamma}$ such that $\tilde g$ involves only the variables $x$ and $y$. 
By Proposition~\ref{prop:principal}(b), the image of $I_{\leq s}$ in $S/\langle \tilde \ell \rangle$ is contained in $\langle g \rangle$. Therefore, $I_{\leq s} \subseteq \langle \tilde g , \tilde \ell \rangle$. 
Observe that $\tilde g, \tilde\ell$ form a regular sequence in $S$, since they are supported on disjoint sets of variables. Hence $\operatorname{ht}(\langle \tilde g, \tilde \ell\rangle) =2$, and we get $\operatorname{ht}(\langle I_{\leq s}\rangle) \leq 2$.

Let $\mathfrak p_1, \ldots, \mathfrak p_n$ be the minimal primes of $\langle I_{\leq s}\rangle$. 
Since $\operatorname{ht}(\langle I_{\leq s}\rangle) \leq 2$, none of the $\mathfrak p_j$ equals $\mathfrak m$. 
Let, if possible, each $\mathfrak p_j$ have height $2$. Since $\mathsf k$ is infinite, $(\mathfrak p_1 \cup \cdots \cup \mathfrak p_n) \cap S_1$ is a finite union of $\mathsf k$-vector subspaces of $S_1$ with dimension at most $2$. 
Therefore, we may choose the general linear form $\tilde \ell$ above so that, in addition, it does not belong to $(\mathfrak p_1 \cup \cdots \cup \mathfrak p_n) \cap S_1$. Since $\tilde \ell$ does not belong to any minimal prime of $\langle I_{\leq s}\rangle$, the ideal $\langle I_{\leq s} \rangle + \langle \tilde \ell \rangle$ has height $3$. But this is a contradiction to the fact that $\langle I_{\leq s},  \tilde \ell \rangle \subseteq \langle \tilde g, \tilde \ell\rangle$ and $\operatorname{ht}(\langle \tilde g, \tilde \ell\rangle) =2$. Therefore, $\langle I_{\leq s}\rangle$ has a minimal prime of height $1$.

Without loss of generality, assume that $\operatorname{ht}(\mathfrak p_1)=1$. Since $S$ is a UFD, $\mathfrak p_1 = \langle p \rangle$ for some $p \in S$ with $\deg(p) \geq 1$. Since $p$ divides every element of $I_{\leq s}$, by definition of $f$, we get that $p \mid f$. Hence $\gamma_0 = \deg(f) \geq \deg(p)\geq 1$.
\end{proof}

\begin{remark}\label{rem:colon-quotient-is-Gor}
Let $\mathsf k$ be any field (not necessarily infinite), and $R = S/I$ be a standard graded Artinian Gorenstein $\mathsf k$-algebra of
codimension three with socle degree $d$. Suppose $f \in S_i$ is such that
$f \notin I$. Then $R' = S/(I : f)$ is a standard graded Artinian Gorenstein
$\mathsf k$-algebra of embedding dimension at most three and socle degree
$d - i$. To see this, let $\mathfrak n$ denote the homogeneous maximal ideal of $R$, and
write $f$ also for its image in $R$ so that $R' \cong R/(0 :_R f)$. Since
$f \notin I$, we have $R' \ne 0$. Since $R$ is Gorenstein,
$\dim_{\mathsf k}(\operatorname{soc}(R)) = 1$. We have an
$R$-linear map
$$  (0 :_R f) :_R \mathfrak n \xrightarrow[]{\times f} (0 :_R \mathfrak n)
  = \operatorname{soc}(R)
$$
whose kernel is $(0 :_R f)$. This gives an injection
$\operatorname{soc}(R') \cong ((0 :_R f) :_R \mathfrak n )/(0:_Rf)\hookrightarrow \operatorname{soc}(R)$, and hence 
$\dim_{\mathsf k}(\operatorname{soc}(R')) \le 1$, and since $R' \ne 0$ is
Artinian local, $\dim_{\mathsf k}(\operatorname{soc}(R')) = 1$, which proves that $R'$
is Gorenstein. Now, since the socle degree of $R$ is $d$ and $f$ has degree $i$, we have $fg=0$ in $R$ for every $g\in R$ of degree $>d-i$. This shows that $R'_j=0$ for $j>d-i$. Since $\langle f \rangle \cap \soc(R) \neq \{0\}$, there is an element $g \in R_{d-i}$ such that $fg \neq 0$. Thus, we see that $R_{d-i}' \neq 0$, and hence the socle degree of $R'$ is $d-i$.
\end{remark}

\begin{theorem}\label{thm:gcd}
Let $\mathsf k$ be any field (not necessarily infinite), and  $R = S/I$ be a standard graded Artinian Gorenstein $\mathsf k$-algebra of codimension three such that  $\NS_R \ge 2$. Then $\gcd(I_{\leq s})=1$.
\end{theorem}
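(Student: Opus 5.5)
The plan is to argue by contradiction. Suppose $f=\gcd(I_{\le s})$ has degree $e\ge 1$. Then $I_i\subseteq f\,S_{i-e}$ for every $i\le s$; in particular $I_i=0$ for $i<e$. We know $I_{t+1}\neq 0$: this was shown in the proof of Proposition~\ref{prop:constant}(c), using only that $h_t=h_{t+1}$, which holds since $\NS_R\ge 2$. Hence $e\le t+1\le s$. We compare Hilbert functions on the peak degrees $t,t+1$ (recall $h_t=h_{t+1}$) and show that divisibility by $f$ forces $h_{t+1}>h_t$.

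\emph{Small $e$ or $f\in I$.} First, if $e=t+1$, then $I_t=0$ and $I_{t+1}\subseteq \mathsf k f$. This gives $h_{t+1}\ge\binom{t+3}{2}-1>\binom{t+2}{2}=h_t$, a contradiction. So assume $e\le t$.

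Next, if $f\in I$, then $I_i=f\,S_{i-e}$ for $e\le i\le s$. Hence $h_i=\binom{i+2}{2}-\binom{i-e+2}{2}$ there, and $h_{i+1}-h_i=e>0$. Taking $i=t$ contradicts $h_t=h_{t+1}$.

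\emph{Main case: $f\notin I$ and $e\le t$.} Set $R'=S/(I:f)$. By Remark~\ref{rem:colon-quotient-is-Gor}, $R'$ is Artinian Gorenstein of embedding dimension at most three and socle degree $d-e$. By Remark~\ref{rem:Stanley}, $h(R')$ is unimodal; it is also symmetric. Hence $h'_j\le h'_{j+1}$ whenever $j+1\le (d-e)/2$.

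Since $I_{i+e}\subseteq fS_i$ for $i+e\le s$, we get $I_{i+e}=f\cdot(I:f)_i$, and so $\dim I_{i+e}=\dim (I:f)_i$. This yields
\[
h_{i+e}=h'_i+\binom{i+e+2}{2}-\binom{i+2}{2}\qquad (0\le i\le s-e).
\]
Apply this with $i=t-e$ and $i=t+1-e$; both are allowed since $t+1\le s$. Subtracting, and using $\binom{t+3}{2}-\binom{t+2}{2}=t+2$ and $\binom{t-e+3}{2}-\binom{t-e+2}{2}=t-e+2$, we obtain
\[
0=h_{t+1}-h_t=(h'_{t+1-e}-h'_{t-e})+e .
\]
Hence $h'_{t+1-e}<h'_{t-e}$.

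It remains to check that we are in the nondecreasing half of $h(R')$. By Lemma~\ref{lem:tplusS}, $d-e=t+s-e$. Since $s\ge t+1$ and $e\ge1$, we have $2(t+1-e)\le t+s-e$, i.e.\ $t+1-e\le (d-e)/2$. This contradicts unimodality and symmetry of $h(R')$.

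The main obstacle I expect is the identity $I_{i+e}=f(I:f)_i$ and the bookkeeping of its range. This identity is exactly where the hypothesis that $f$ divides all of $I_{\le s}$ is used, and one needs $t+1\le s$ so that both peak indices lie in the usable range. The edge cases ($f\in I$, or $e>t$) must be handled separately, since Remark~\ref{rem:colon-quotient-is-Gor} needs $f\notin I$. None of this uses that $\mathsf k$ is infinite.
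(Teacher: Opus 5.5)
Your proposal is correct and follows essentially the same route as the paper: the same case split ($e=t+1$, then $f\in I$, then the main case), the same passage to $R'=S/(I:f)$ via Remark~\ref{rem:colon-quotient-is-Gor}, and the same identity $I_{i+e}=f\cdot(I:f)_i$ for $i+e\le s$. The only difference is in the final contradiction. You use the single drop $h'_{t+1-e}<h'_{t-e}$ and check that it lies in the nondecreasing first half of the symmetric unimodal $h(R')$. The paper instead uses the drops over the whole peak range together with $h'_{t-\gamma_0}=h'_s$ to exhibit a valley at $s-\gamma_0$.
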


\begin{proof}
Let $f=\gcd(I_{\le s})$ and $\deg(f)=\gamma_0$. Let, if possible, $\gamma_0\ge1$. As seen in the proof of Proposition~\ref{prop:constant}(c), since $s\ge t+1$ and $h_t=h_{t+1} = \cdots= h_s$, we have $I_{t+1}\neq0$. Thus, $\gamma_0 \leq t+1$. 

If $\gamma_0=t+1$, then we have $I_{t+1}\subseteq \mathsf k \cdot f$, which forces $\dim_{\mathsf k}(I_{t+1})=1$. 
Since $t \geq 1$, we get $h_{t} = h_{t+1} = \binom{t+3}{2}-1>\binom{t+2}{2}\geq h_t$, which is a contradiction. So, we must have $\gamma_0 \leq t$.

Observe that $f \notin I$. Indeed, if $f\in I$, then for $\gamma_0\le i\le s$ we have $I_i=f \cdot S_{i-\gamma_0}$, and thus
\[
h_i=\dim_{\mathsf k}(S_i)- \dim_{\mathsf k}(I_i)=\binom{i+2}{2}-\binom{i-\gamma_0+2}{2},
\]
which forces $h_{t+1}-h_t = \gamma_0>0$, a contradiction. 

Now, by Remark~\ref{rem:colon-quotient-is-Gor}, $R'=S/(I:f)$ is Artinian Gorenstein of codimension at most $3$ with socle degree $d-\gamma_0$. Therefore, by Remark~\ref{rem:Stanley},  $h(R')=(h'_0, \ldots, h'_{d-\gamma_0})$ is symmetric and unimodal. Observe that since $t+s=d$, we have $d-\gamma_0\geq d-t= s$.

 Since $f=\gcd(I_{\leq s})$, for any fixed $i\leq s$, every element of $I_i$ is of the form $fr$ for some $r \in S_{i-\gamma_0}$. On the other hand, $fr$ belongs to $I_i$ precisely when
$r \in (I:f)_{i-\gamma_0}$. Hence
$ I_i = f \cdot (I:f)_{\,i-\gamma_0}$.
In particular, we have $\dim_{\mathsf k}(I_i)= \dim_{\mathsf k}((I\colon f)_{i-\gamma_0})$. 
This gives
\begin{align*}
h_i
&= \dim_{\mathsf k}(S_i)
   - \dim_{\mathsf k}\bigl((I\colon f)_{i-\gamma_0}\bigr) \\
&= \dim_{\mathsf k}(S_i)
   - \left(\dim_{\mathsf k}(S_{i-\gamma_0})
   - h'_{i-\gamma_0}\right) \\
&= \binom{i+2}{2}
   - \binom{i-\gamma_0+2}{2}
   + h'_{i-\gamma_0},
\end{align*}
or equivalently, $ h'_{i-\gamma_0}= h_i- \binom{i+2}{2}
   + \binom{i-\gamma_0+2}{2}$ for all $\gamma_0\leq i \leq s$. 
   
  Now, for $t+1\le i\le s$, using $h_i-h_{i-1}=0$ and the above expression for $h'_{i-\gamma_0}$, we get $$h'_{i-\gamma_0}-h'_{i-\gamma_0-1} = -\gamma_0<0.$$
This shows that $h'_{t-\gamma_0}> h'_{t+1-\gamma_0} > \cdots > h'_{s-\gamma_0}$. 
In particular, by the symmetry of $h(R')$, we have $$h'_{s-\gamma_0} < h'_{t-\gamma_0}= h'_{(d-\gamma_0)-(t-\gamma_0)}=h'_{d-t}= h'_{s}.$$
Hence, for $s-\gamma_0-1 < s-\gamma_0 <s$ we have $h'_{s-\gamma_0-1} > h'_{s-\gamma_0} <h'_s$, which is a contradiction to the unimodality of $h(R')$. Hence, we must have $\gamma_0=0$ and $\gcd(I_{\leq s})=1$.
\end{proof}

We are now ready to prove the main theorem of this article.

\begin{theorem}\label{thm:main}
Let $\mathsf k$ be an infinite field, $S = \mathsf k[x,y,z]$, and $R = S/I$ be a standard graded Artinian Gorenstein $\mathsf k$-algebra of codimension three. 
If $\NS_R \ge 3$, then $R$ has the weak Lefschetz property.
\end{theorem}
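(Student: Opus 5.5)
The plan is to combine Corollary~\ref{cor:wlp-gamma}, Proposition~\ref{prop:gcd-nonconstant}, and Theorem~\ref{thm:gcd}, arguing by contradiction.

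Fix a general linear form $\ell$, chosen so that the proofs of Proposition~\ref{prop:constant} and Proposition~\ref{prop:gcd-nonconstant} apply. Let $\gamma$ denote the common value $c_{t+1}=\cdots=c_s$. Since $\NS_R\geq 3\geq 2$, Corollary~\ref{cor:wlp-gamma} shows that $R$ has the WLP if and only if $\gamma=0$. So it suffices to rule out $\gamma\geq 1$.

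Suppose $\gamma\geq 1$. Because $\mathsf k$ is infinite and $\NS_R\geq 3$, Proposition~\ref{prop:gcd-nonconstant} gives that $\gcd(I_{\leq s})$ has positive degree. On the other hand, $\NS_R\geq 2$ holds, so Theorem~\ref{thm:gcd}, which is valid over any field, gives $\gcd(I_{\leq s})=1$. These two conclusions contradict each other, hence $\gamma=0$ and $R$ has the WLP.

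The only step needing care is that "general" is used consistently. The linear form $\ell$ has to satisfy finitely many nonempty Zariski-open conditions at once: it must avoid the linear factors of a chosen nonzero element of $I_{t+1}$ (used in Proposition~\ref{prop:constant}(c)), and it must avoid the height-two minimal primes in the proof of Proposition~\ref{prop:gcd-nonconstant}. In addition, Remark~\ref{rem:middle} refers to a general $\ell$. Over an infinite field any finite intersection of nonempty open sets is nonempty, so a single $\ell$ satisfying all these conditions exists. Beyond this bookkeeping the argument is a direct assembly of the earlier results, and I do not expect any real obstacle.
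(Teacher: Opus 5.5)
Your proposal is correct and is essentially the paper's own proof: you argue by contradiction, using Corollary~\ref{cor:wlp-gamma} to get $\gamma\ge 1$, Proposition~\ref{prop:gcd-nonconstant} to get $\deg\gcd(I_{\le s})\ge 1$, and Theorem~\ref{thm:gcd} to get $\gcd(I_{\le s})=1$. Your extra bookkeeping is valid and is left implicit in the paper: a single $\ell$ can be chosen to satisfy the finitely many nonempty open genericity conditions at once (the height-two minimal primes of $\langle I_{\le s}\rangle$ depend only on $I$).
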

\begin{proof}
For the sake of contradiction, suppose that $R$ does not have the WLP. Then by Corollary~\ref{cor:wlp-gamma}, we have $\gamma \ge 1$. 
 Since $\gamma>0$, by 
Proposition~\ref{prop:gcd-nonconstant},
$f = \gcd(I_{\le s})$ has positive degree.
On the other hand, by
Theorem~\ref{thm:gcd}, $\gcd(I_{\le s}) = 1$.
This contradiction forces $\gamma=0$. Hence, by Corollary ~\ref{cor:wlp-gamma}, $R$ has the WLP.
\end{proof}
As noted in the introduction, in positive characteristic, the above result is best possible in the sense that there exist Artinian Gorenstein algebras of codimension three with $\NS_R=2$ that do not satisfy the WLP.

\end{document}